\definecolor{amaranth}{rgb}{0.9, 0.17, 0.31}
\definecolor{bluegray}{rgb}{0.4, 0.6, 0.8}
\newtheorem*{maintheorem*}{Main Theorem}
\newtheorem{theorem}{Theorem}[section]
\newtheorem*{theorem*}{Theorem}
\newtheorem{remark}[theorem]{Remark}
\newtheorem*{example*}{Example}
\newtheorem*{conjecture*}{Conjecture}
\def\1{\mathbf 1}
\def\u{\mathbf u}
\def\0{\mathbf 0}
\def\cB{\mathcal B}
\def\cC{\mathcal C}
\def\cD{\mathcal D}
\def\cF{\mathcal F}
\def\cH{\mathcal H}
\def\cJ{\mathcal J}
\def\cK{\mathcal K}
\def\cO{\mathcal O}
\def\PG{{\rm PG}}
\def\PGL{{\rm PGL}}
\def\PGammaL{{\rm P\Gamma L}}
\def\PGammaO{{\rm P\Gamma O}}
\def\GF{{\rm GF}}
\def\GL{{\rm GL}}
\def\Aut{{\rm Aut}}
\def\GF{{\rm GF}}
\def\mod{{\rm mod} }
\def\Tr{{\rm Tr}}
\def\<{\langle}
\def\>{\rangle}
\newcommand\comment[1]{}
\newcommand*{\shifttext}[2]{
  \settowidth{\@tempdima}{#2}
  \makebox[\@tempdima]{\hspace*{#1}#2}
}
\newcommand\redsout{\bgroup\markoverwith{\textcolor{amaranth}{\rule[0.5ex]{2pt}{0.4pt}}}\ULon}
\newcommand\redout{\bgroup\markoverwith
{\textcolor{red}{\rule[.4ex]{2pt}{0.8pt}}}\ULon}
\title{Classification of flocks \\ of the quadratic cone in $\PG(3,64)$}
\author{Giusy Monzillo\footnote{The research was supported by the Italian National Group for Algebraic and Geometric Structures and their Applications (GNSAGA-INdAM). } \\
\small {\tt giusy.monzillo@unibas.it}\\[0.8ex]
\small Dipartimento di Matematica, Informatica ed Economia\\[-0.8ex]
\small Universit\`a degli Studi della Basilicata\\[-0.8ex]
\small Viale dell'Ateneo Lucano 10 \\[-0.8ex]
\small 85100 Potenza, Italy\\
\and   
Tim Penttila \\ 
\small {\tt penttila86@msn.com}\\
\small School of Mathematical Sciences\\[-0.8ex]
\small The University of Adelaide\\[-0.8ex]
\small Adelaide, South Australia \\[-0.8ex]
\small 5005 Australia
\and
Alessandro Siciliano$^*$ \\
\small{\tt alessandro.siciliano@unibas.it}\\[0.8ex]
\small Dipartimento di Matematica, Informatica ed Economia\\[-0.8ex]
\small Universit\`a degli Studi della Basilicata\\[-0.8ex]
\small Viale dell'Ateneo Lucano 10 \\[-0.8ex]
\small 85100 Potenza, Italy\\
}
\date{}
\begin{document}


\maketitle

\begin{abstract}
Flocks are an important topic in the field of finite geometry, with many relations with other objects of interest. This paper is a contribution to the difficult problem of classifying flocks up to projective equivalence.
We complete the classification of flocks of the quadratic cone in $\PG(3,q)$ for $q \le 71$, by showing by computer that there are exactly three flocks of the quadratic cone in $\PG(3,64)$, up to equivalence. The three flocks had previously been discovered, and they are the linear flock, the Subiaco flock and the Adelaide flock. The classification proceeds via the connection between flocks and herds of ovals in $\PG(2,q)$, $q$ even, and uses the prior classification of hyperovals in $\PG(2,64)$.
\end{abstract}

{\it Keywords: oval, flock, generalized quadrangle}             

{\it AMS Class.: 51E21, 51E20, 51E12}

\section{Introduction}

Circle planes were introduced by van der Waerden and Smid in \cite{vws} as an incidence theoretic analogue of the study of quadrics in projective 3-space containing a non-empty non-degenerate conic.
They come in three kinds: inversive planes (corresponding to elliptic quadrics), Minkowski planes (corresponding to hyperbolic quadrics) and Laguerre planes (corresponding to quadratic cones).
There are non-classical circle planes, such as inversive planes arising from ovoids that are not elliptic quadrics, Minkowski planes arising from sharply 3-transitive sets of permutations other than  $\PGL(2,F)$ and Laguerre planes arising from cones with base an oval that is not a conic. Van der Waerden and Smid \cite{vws} (for inversive and Laguerre planes) and Kaerlein \cite{kaer} (for Minkowski planes) characterized the circle planes arising from quadrics as those that satisfy  Miquel's Theorem \cite{dela}. The study of flocks of the quadratic cone grew out of an analogous study of flocks of Miquelian inversive and Minkowski planes in the period 1968\,-1976. The foundational papers on flocks of the quadratic cone are \cite{walker} and \cite{ft}.

Flocks of finite Miquelian circle planes were classified for inversive planes in even characteristic by  Thas \cite{thas2} and  in odd characteristic by Orr \cite{orr} (correcting an error of Dembowski \cite{dem});   for Minkowski planes the classification was obtained by combining the results of  Thas \cite{thas3}, Bader and  Lunardon \cite{bl}  and  Thas \cite{thas4}, with a simplified proof in \cite{ds}. There is little hope for a corresponding result for flocks of Miquelian Laguerre planes:  rather the best that can be done is
classification for small order. This paper completes the classification of those of order  up to 71 by supplying the last omitted case (that of order 64). A brief overview of the subject is given in \cite{pen2}.

Flocks play an important role in finite geometry. Here, we summarize some of the main lines of research involving flocks.
One of the original motivations for studying flocks of the quadratic cone was the Thas-Walker construction of a translation plane (via the connection of translation planes with spreads from  the construction of Andr\'e \cite{andr} and Bruck-Bose \cite{bb}) which appears in Walker \cite{walker} and Fisher-Thas \cite{ft}. (A similar
construction of translation planes from flocks of Miquelian Minkowski planes was given by Thas \cite{thas3} and Walker \cite{walker}; and the corresponding construction of translation planes from flocks of Miquelian inversive planes was given by Thas \cite{thas2} and used by Orr \cite{orr}.)

A method of constructing generalized quadrangles was developed  by Kantor and Payne from 1980 to 1986 \cite{kan80, pay80, pay85, kan86} which was linked to the study of flocks of the quadratic cone by Thas in 1987 \cite{thas1} giving a further motivation for their study. This paper of Thas inspired the paper 	\cite{gj} where the conditions on a translation plane to arise from a flock of the quadratic cone via the Thas-Walker construction were established. Furthermore, the paper \cite{gj} contains an easy proof that isomorphic flocks correspond to isomorphic translation planes, and conversely; this means that the correspondence between flocks and translation planes via the Thas-Walker construction is functorial.

In \cite{dst} sets of points in a generalized quadrangle meeting every line in 0 or 2 points were introduced; in \cite{dst2} these sets in the classical quadrangle $Q(4,q)$, $q$ odd, were used to characterize the quadrangle itself. Later, in \cite{blt}, the same sets in $Q(4,q)$, $q$ odd, were used to construct new flocks of the
quadratic cone derived from old ones, and Kantor named them {\em BLT-sets} in \cite{kan91}, after the intial letters of the surnames of the authors of  \cite{blt}. The corresponding procedure in even characteristic was introduced by Payne and Thas in \cite{pt2}. In  \cite{knarr}, Knarr simplified the connection between BLT-sets and generalized quadrangles.  The Knarr construction clarified (in odd characteristic) the non-functorial nature of the connection between flocks of the quadratic cone and generalized quadrangles, as the connection between BLT-sets and generalized quadrangles is functorial (isomorphic BLT-sets correspond to isomorphic generalized quadrangles)  while that between  BLT-sets and flocks of the quadratic cone is not (isomorphic flocks correspond to isomorphic BLT-sets with a distinguished point). Thus, isomorphic flocks correspond to isomorphic generalized quadrangles with a distinguished line
on the elation point.

A connection between flocks of the quadratic cone in even characteristic and ovals was developed by Payne  \cite{pay88}  and Cherowitzo {\em et al.}  \cite{cppr} which led to a collection of ovals that,  in the latter paper, was named a {\em herd}.
The action of $\PGammaL(2,q)$ introduced in \cite{okp} shows that herds of ovals play a role in even characteristic analogous to that of BLT-sets in odd characteristic: the correspondence between herds of ovals and generalized quadrangles is functorial, while that between herds of ovals and flocks of the quadratic cone is not; again, isomorphic flocks correspond to isomorphic generalized quadrangles with a distinguished line on the elation point.

A hyperbolic fibration is a partition of the projective 3-space into two lines and a certain number of hyperbolic quadrics. The study of hyperbolic fibrations goes back to  Denniston \cite{denn} and was developed, principally, by Baker and Ebert in a series of papers from 1988 to 2005  \cite{be88,ebert89,jpw91,bdew99,bew01} before being brought under the umbrella of
a single construction from flocks of the quadratic cone  by Baker, Ebert and Penttila in  \cite{bep05}. This gives a second connection between flocks of the quadratic cone and translation planes.

Further, subsequent, connections with flocks of the quadratic cone also exist - with partial geometries (mediated by hemisystems) and with spreads of generalized quadrangle (mediated by ovals).
The quadrangles arising from flocks contain hemisystems, from which partial geometries arise \cite{bgr}.  The ovals of a herd give rise to Tits quadrangles, which, in turn, admit spreads \cite{bokppr}.

Let us now turn to the important problem of classifying flocks up to projective equivalence. The present paper is a contribution to this problem.
All flocks of the quadratic cone of $\PG(3,q)$ have been classified for $q$ up to 71, except for $q=64$, by using in most of cases the equivalences discussed above. We emphasise that these kinds of problems are best attacked via the objects with the coarsest equivalence relations. It is much easier
to determine the flocks after having determined the BLT-sets or the herds, and much easier to determine the hyperbolic fibrations after determining the flocks, as each simply requires an orbit calculation of a group already determined. Moreover, the lists involved are shorter this way - there
is less hair-splitting.

In this paper,  we  classify the herds in $\PG(2,64)$, up to isomorphism. By the correspondence  between flocks of the quadratic cone  in $\PG(3,q)$ and herds of ovals in $\PG(2,q)$ for $q$ even, this is equivalent to classifying the flocks of the quadratic cone in $\PG(3,64)$, up to equivalence. That we are able to do this depends upon Vandendriessche's recent determination of the hyperovals of $\PG(2,64)$ \cite{vander}.
This is a second reason to approach this problem via herds: applying the knowledge of hyperovals
is easiest in the herd setting.

\section{Preliminaries and known results}\label{sec_2}

Let	$\cK$ be a quadratic cone in $\PG(3,q)$	with vertex $V$. A {\em flock} of $\cK$ is a set of $q$ planes partitioning the cone minus  $V$ into disjoint conics. 
If all the planes of the flock meet in an (external) line, the flock  is called {\em linear}, and linear flocks exist for all $q$.  As all quadratic cones of $\PG(3,q)$ are equivalent under the action of $\PGammaL(4,q)$, we can choose $\cK$ to have equation $y^2-xz=0$, with the vertex $V(0,0,0,1)$. In accordance with this choice of coordinates, a flock of $\cK$ is a set of
planes $\cJ=\{[a_t,b_t,c_t,1]:t\in\GF(q)\}$. Two flocks of $\cK$ are {\em projectively equivalent} if there is a collineation of $\PG(3,q)$ fixing $\cK$ and taking the first flock to the second. 

 On the set of all $2\times 2$ matrices over $\GF(q)$, there exists an equivalence relation defined as follows. Let  $A=\begin{pmatrix}x & y \\ w & z  \end{pmatrix}$ and $B=\begin{pmatrix}x' & y' \\ w' & z'  \end{pmatrix}$. Then $A\equiv B$ if and only if $x=x'$, $z=z'$ and $y+w=y'+w'$. Note that $A\equiv B$ if and only if $\alpha A\alpha^T=\alpha B\alpha^T$ if and only if $\alpha(A+A^T)=\alpha(B+B^T)$, for all $\alpha\in\GF(q)^2$.

A $q$-{\em clan} is a set of $q$ equivalence classes 
\[
\cC=\left\{\overline  A_t:t\in\GF(q)\right\},
\]
such that $A_s-A_t$ is anisotropic, that is $\u(A_s-A_t)\u^T=0$ if and only if $\u=0$. Note that the definition does not depend on the choice of the representatives of the classes in  $\cC$. Therefore, one may assume that $A_t=\begin{pmatrix}a_t & b_t \\ 0 & c_t  \end{pmatrix}$. In \cite{thas1} Thas proved that the set $\cC$ is a $q$-clan if and only if $\cJ(\cC)=\{[a_t,b_t,c_t,1]:t\in\GF(q)\}$ is a flock of $\cK$.

Two $q$-clans $\cC=\{\overline{A_t}:t\in\GF(q)\}$ and $\cC'=\{\overline{A'_t}:t\in\GF(q)\}$ are {\em equivalent} if there are $\lambda\in\GF(q)\setminus\{0\}$, $B\in\GL(2,q)$, $M$ a $2\times 2$ matrix over $\GF(q)$, $\sigma\in\Aut(\GF(q))$ and a permutation $\pi:t\mapsto  t'$ of $\GF(q)$ such that  
\[
A_{t'}\equiv\lambda BA_t^\sigma B^T+M,
\]
for all $t\in\GF(q)$. Hence, one can assume that $A_0$ is the zero matrix.  

Let $\cC=\{\overline{A_t}:t\in\GF(q)\}$ be a $q$-clan. Consider the group $G=\{(\alpha, c, \beta ) : \alpha, \beta \in \GF(q)^2, c\in \GF(q)\}$ with the binary operation $(\alpha, c, \beta)\cdot (\alpha', c', \beta')=(\alpha+\alpha', c+c'+\beta(\alpha')^T, \beta+\beta')$. Then, the following subgroups of $G$ are defined:
\[
A(\infty)=\{(\underline 0,0, \beta):\beta \in \GF(q)^2\}, \ \ \ A^*(\infty)=\{(\underline 0, c, \beta):c\in\GF(q),\beta \in \GF(q)^2\}
\]
\[
\begin{aligned}
A(t)&=\{(\alpha, \alpha A_t\alpha^T, \alpha (A_t+A_t^T)):\alpha \in \GF(q)^2\},\\[.1in]
 A^*(t)&=\{(\alpha, c, \alpha (A_t+A_t^T)):\alpha \in \GF(q)^2,c\in\GF(q)\},
 \end{aligned}
\]
for $t\in \GF(q)$. These form a so called 4\emph{-gonal family for G} \cite{kan86,pay85}, starting from which a generalized quadrangle GQ$(\cC)$ of order $(q^2,q)$ may be constructed in the standard way \cite{kan80}, \cite[A.1.1, A.1.2]{pt}.

The points are:

\begin{itemize}
\item[(i)]  the elements of $G$;
\item[(ii)] the cosets $A^*(t)g$, $t\in \GF(q) \cup \{\infty\}$, $g\in G$;
\item[(iii)]  the symbol $(\infty)$.
\end{itemize}

The lines are: 

\begin{itemize}
\item[(a)] the cosets $A(t)g$, $t\in \GF(q) \cup \{\infty\}$, $g\in G$;
\item[(b)] the symbols $[A(t)]$, $t\in \GF(q) \cup \{\infty\}$.
\end{itemize}

Incidence is defined by the following rules: the point $(\infty)$ is on the $q+1$ lines of type (b); a point  $A^*(t)g$ is on the line $[A(t)]$ and on the $q$ lines of type (a) contained in $A^*(t)g$; a point $g$ is on the $q+1$ lines $A(t)g$ that contain it. There are no other incidences. 

Therefore, a $q$-clan $\cC$ gives rise to the generalized quadrangle $\mathrm{GQ}(\cC)$. In particular $\mathrm{GQ}(\cC)$ is an {\em elation} GQ {\em about the (elation) point} $(\infty)$ \cite{kan86,pay80,pay85}. Because of the connection between $q$-clans and flocks of the quadratic cone, a $\mathrm{GQ}(\cC)$ is also called a {\em flock quadrangle}. 
\begin{theorem}\cite{pay96} {\em(The Fundamental theorem of $q$-clan geometry)}
For any prime power $q$, let $\cC = \{\overline{A_t}	: t \in\GF(q)\}$ and
$\cC' = \{\overline{A_t'}	: t \in\GF(q)\}$ be two (not necessarily distinct) $q$-clans 
such that $A_0= A_0'=\begin{pmatrix}0 & 0 \\ 0 & 0\end{pmatrix}$. Then, the following are equivalent: 
\begin{itemize}
 \item[(i)] $\cC$ and $\cC'$ are equivalent.
\item[(ii)] The flocks $\cJ(\cC)$ and $\cJ(\cC')$ are projectively equivalent.
\item[(iii)] $\mathrm{GQ}(\cC)$ and $\mathrm{GQ}(\cC')$ are isomorphic by an isomorphism mapping $(\infty)$ to $(\infty)$, $[A(\infty)]$ to $[A'(\infty)]$, and $(\underline 0, 0, \underline0)$ to $(\underline0, 0, \underline0)$.
\end{itemize}
\end{theorem}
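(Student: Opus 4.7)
I would establish the three equivalences via $(i)\Leftrightarrow(ii)$ and $(i)\Leftrightarrow(iii)$. Each direction from $(i)$ is a transport-of-structure argument: the clan-equivalence data $(\lambda,B,M,\sigma,\pi)$ assemble into an explicit cone-preserving collineation (for $(ii)$) and into an automorphism of the group $G$ (for $(iii)$). The reverse directions extract these data from abstract geometric isomorphisms, and the normalization $A_0=A_0'=0$ in both clans matches, on each side, the further condition of fixing a distinguished object (the plane $[0,0,0,1]$ of the flock; the coset identity of the GQ). The main obstacle will be $(iii)\Rightarrow(i)$, where the geometric data are thinnest.

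For $(i)\Rightarrow(ii)$, the key point is that every element of $\PGammaL(4,q)$ stabilizing the cone $\cK\colon y^2=xz$ fixes the vertex $V$, so it is generated by (a) the symmetric-square action of $\GL(2,q)$ on the $(x,y,z)$-coordinates, (b) the scaling $w\mapsto \lambda^{-1}w$, (c) a translation $w\mapsto w+\ell(x,y,z)$, and (d) the Frobenius $\sigma$. The clan-equivalence data assemble into such a collineation, and substituting into the plane equations $[a_t,b_t,c_t,1]$ verifies that $\cJ(\cC)\mapsto\cJ(\cC')$, with $\pi$ relabelling the planes. The converse $(ii)\Rightarrow(i)$ is obtained by decomposing an arbitrary cone-stabilizing collineation into these generators and reading off $\lambda$, $B$, $M$ and $\sigma$.

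For $(i)\Rightarrow(iii)$, the group $G$ and its operation depend only on $q$, so the clan data induce an automorphism
\[
\varphi\colon (\alpha,c,\beta)\longmapsto\big(\alpha^\sigma B,\ \lambda c^\sigma,\ \beta^\sigma \widetilde B\big),
\]
where $\widetilde B$ is induced by $B$ on the space of row vectors $\beta$. One checks directly that $\varphi(A(\infty))=A(\infty)$, $\varphi(A^*(\infty))=A^*(\infty)$ and $\varphi(A(t))=A(t')$, $\varphi(A^*(t))=A^*(t')$ for $t\in\GF(q)$; the translation term $M$ is then absorbed by post-composing with a left translation by a suitable element of $A^*(\infty)$, which preserves $(\infty)$, $[A(\infty)]$ and the identity coset. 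Extending $\varphi$ equivariantly to all cosets yields the desired pointed isomorphism $\mathrm{GQ}(\cC)\to\mathrm{GQ}(\cC')$.

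The remaining direction $(iii)\Rightarrow(i)$ is the heart of the matter. Given $\Phi\colon\mathrm{GQ}(\cC)\to\mathrm{GQ}(\cC')$ fixing the prescribed triple, one uses that $(\infty)$ is the elation point for the uniquely determined elation group $G$ (the subgroup of collineations fixing $(\infty)$ linewise), so $\Phi$ conjugates $G$ to itself by an automorphism $\varphi$. Fixing $[A(\infty)]$ forces $\varphi$ to stabilize $A(\infty)$ (and hence $A^*(\infty)$, which is the centralizer of a suitable subgroup of $A(\infty)$), while fixing the identity coset rules out any nontrivial left-translation component of $\Phi$. A careful application of the 4-gonal axioms then forces $\varphi$ to have the form displayed above, after which matching $\varphi(A(t))$ against $A(t')$ for each $t\in\GF(q)$ yields matrices $\lambda$, $B$, $M$ and a field automorphism $\sigma$ realising the clan equivalence $A_{t'}\equiv\lambda B A_t^\sigma B^T+M$ between $\cC$ and $\cC'$.
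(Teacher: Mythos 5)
First, note that the paper does not prove this statement: it is quoted verbatim from Payne \cite{pay96} (see also \cite{cp}), so there is no internal proof to compare against; your proposal must therefore be judged on its own merits. Your overall architecture --- $(i)\Leftrightarrow(ii)$ by decomposing the stabilizer of the cone $y^2=xz$ into the symmetric-square copy of $\GL(2,q)$, the $w$-scalings, the $w$-translations and Frobenius, and $(i)\Rightarrow(iii)$ by assembling $(\lambda,B,\sigma)$ into an automorphism of $G$ --- is the standard and correct route. One local error: the matrix $M$ is \emph{not} absorbed by a left translation by an element of $A^*(\infty)$. Left translations do not preserve right cosets of the $A(t)$ (so they are not automorphisms of $\mathrm{GQ}(\cC)$), and no single translation can produce the $\alpha$-dependent shift that is needed. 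The correct device is the automorphism $\theta_M\colon(\alpha,c,\beta)\mapsto\bigl(\alpha,\;c+\alpha M\alpha^T,\;\beta+\alpha(M+M^T)\bigr)$ of $G$, which one checks carries the 4-gonal family built from $\{A_t\}$ to the one built from $\{A_t+M\}$ and fixes $(\infty)$, $[A(\infty)]$ and the identity.

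The genuine gap is in $(iii)\Rightarrow(i)$, exactly where you locate the heart of the matter. You assert that $G$ is ``the uniquely determined elation group (the subgroup of collineations fixing $(\infty)$ linewise)'', and deduce that $\Phi$ conjugates $G$ to $G$. Neither clause is true in general. The full group of collineations fixing every line through $(\infty)$ typically properly contains $G$ (for the classical clan, $\mathrm{GQ}(\cC_L)\cong H(3,q^2)$ and this linewise stabilizer is far larger than $q^5$), and elation groups about a fixed point of a generalized quadrangle need not be unique or characteristic --- $H(3,q^2)$ with $q$ even, i.e.\ precisely the classical case relevant to this paper, is known to admit non-isomorphic elation groups about a point. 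So the claim that $\Phi$ normalizes $G$ cannot be obtained this way; establishing that a pointed isomorphism of the coset geometries is, up to known automorphisms, induced by an isomorphism of the 4-gonal families is the nontrivial content of Payne's theorem, and your sketch replaces it with an appeal to a false uniqueness statement followed by ``a careful application of the 4-gonal axioms''. To repair this you would need to follow Payne's actual argument (or the treatment in \cite{cp}), which works directly with the coset description and the distinguished objects $(\infty)$, $[A(\infty)]$, $(\underline 0,0,\underline 0)$ rather than with an abstract characterization of $G$.
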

\begin{remark}{\em 
In \cite{blp}, it is proved that, for a given $q$-clan $\cC$, there are, up to equivalence, $q+1$ flocks of the quadratic cone associated with $\mathrm{GQ}(\cC)$, one for each line through $(\infty)$. Two such flocks are projectively equivalent if and only if the corresponding lines belong to the same orbit of the stabilizer of $(\infty)$ in the automorphism group of the quadrangle. 
}
\end{remark}

When $q$ is odd, flocks of the quadratic cone are related to {\em BLT-sets} of the parabolic quadric $Q(4,q)$ in $\PG(4,q)$. A {\em BLT-set} of $Q(4,q)$ is a set $\cB$ of $q+1$ points such that every point of $Q(4,q)$ not in $\cB$ is collinear (on $Q(4,q)$) with 0 or 2 points of $\cB$. BLT-sets exist only for $q$ odd \cite{dst}.  Two  BLT-sets are equivalent if they are in the same orbit of the automorphism group $\PGammaO(5,q)$ of $Q(4,q)$.
\begin{theorem}\cite{blt}
Every flock of the quadratic cone in $\PG(3,q)$, $q$ odd, determines a BLT-set of $Q(4, q)$. Conversely, given a BLT-set $\cB$ of $Q(4,q)$, $q$ odd, and a point $P$ of $\cB$, there arises a flock of the quadratic cone of $\PG(3,q)$.	Moreover, equivalent flocks give equivalent BLT-sets, and conversely, two flocks arising from the points $P$ and $Q$ of a BLT-set $\cB$ are equivalent if and only if $P$ and $Q$ lie in the same orbit of the stabilizer of $\cB$ in $\PGammaO(5,q)$.
\end{theorem}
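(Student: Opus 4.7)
The plan is to realize the correspondence geometrically, embedding the quadratic cone inside $Q(4,q)$ and using its polarity $\perp$. For $q$ odd, fix a point $V\in Q(4,q)$; the tangent hyperplane $T_V$ meets $Q(4,q)$ in a quadratic cone with vertex $V$, which one identifies with $\cK$. A flock $\cF=\{\pi_0,\ldots,\pi_{q-1}\}$ of $\cK$ then consists of $q$ planes of $T_V$, each avoiding $V$. To each $\pi_t$ I would attach the polar line $\ell_t=\pi_t^\perp$: since $\pi_t\subset V^\perp$, the point $V$ lies on $\ell_t$, and since $V\notin\pi_t$, the line $\ell_t$ is not contained in $T_V$ and hence is a secant meeting $Q(4,q)$ in $V$ and in exactly one further point $P_t$. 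Setting $\cB=\{V\}\cup\{P_t:t\in\GF(q)\}$ yields a set of $q+1$ distinct points of $Q(4,q)$ (distinctness follows since $P_s=P_t$ would force $\ell_s=\ell_t$, hence $\pi_s=\pi_t$).

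The main obstacle is verifying the BLT condition for $\cB$. For $X\in Q(4,q)\setminus\cB$, collinearity of $X$ with $P_t$ on $Q(4,q)$ is equivalent to $X\in P_t^\perp$, so one must count $|\{P_t : P_t\in X^\perp\}|$ and show it is $0$ or $2$, never $1$ nor $\geq 3$. I would translate the problem via the $q$-clan realization of $\cF$ recalled in the preliminaries: the $P_t$ admit explicit coordinates in terms of the entries of the matrices $A_t$, and membership of $P_t$ in $X^\perp$ becomes a polynomial equation in $t$ whose coefficients are quadratic forms in the coordinates of $X$. The anisotropy condition, that $\u(A_s-A_t)\u^T=0$ implies $\u=\underline{0}$ for all $s\neq t$, is exactly what forces this polynomial to split with discriminant a non-square, yielding $0$ or $2$ roots in $\GF(q)$; this finite-field computation is the algebraic heart of the argument and is where the flock condition converts precisely into the BLT condition.

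Running the same construction backwards gives the converse: to $\cB$ together with a distinguished $V$ I would assign, for each $P\in\cB\setminus\{V\}$, the polar plane $\pi_P=\langle V,P\rangle^\perp\subset T_V$, and the BLT property translates back to the flock property via the same computation. This establishes a bijection between flocks of $\cK$ with vertex $V$ and BLT-sets of $Q(4,q)$ with a distinguished point $V$. For the equivalence statement, the stabilizer of $\cK$ in $\PGammaL(4,q)$ is induced by the stabilizer of $V$ (equivalently, the pair $(V,T_V)$) in $\PGammaO(5,q)$, so equivalence of flocks sharing a vertex lifts to $\PGammaO(5,q)$-equivalence of BLT-sets fixing the distinguished point. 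For two points $P,Q$ of a single $\cB$, equivalence of the associated flocks is then exactly the existence of an element of $\PGammaO(5,q)$ stabilizing $\cB$ and sending $P$ to $Q$, i.e.\ of $P$ and $Q$ lying in the same orbit of $\Stab_{\PGammaO(5,q)}(\cB)$ on $\cB$.
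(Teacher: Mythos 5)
The paper does not prove this theorem; it is quoted from Bader--Lunardon--Thas \cite{blt} as background, so your argument has to stand on its own. Its geometric skeleton is the standard one and is sound: realizing $\cK$ as $V^{\perp}\cap Q(4,q)$, sending each flock plane $\pi_t$ to the secant line $\pi_t^{\perp}$ through $V$ and thence to its second quadric point $P_t$, is exactly the classical correspondence, and your justification that $\pi_t^{\perp}$ is secant is correct. Note in passing that for a point $X\in\cK\setminus\{V\}$ one has $X\sim P_t$ if and only if $X\in P_t^{\perp}\cap V^{\perp}=\pi_t$, so on the cone itself the ``sees $0$ or $2$'' condition is literally the partition property of the flock; this also makes the converse direction immediate and cleaner than your ``same computation'' suggests. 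The final equivalence statements correctly reduce to the fact that $\Stab_{\PGammaO(5,q)}(V)$ induces on $V^{\perp}$ the full stabilizer of $\cK$ in $\PGammaL(4,q)$, though that extension fact itself deserves a line of justification.

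The genuine gap is in what you call the algebraic heart: showing that a point $X\in Q(4,q)$ with $X\not\sim V$ is collinear with $0$ or $2$ of the $P_t$. The mechanism you propose --- that $P_t\in X^{\perp}$ is ``a polynomial equation in $t$'' which the anisotropy condition forces ``to split with discriminant a non-square, yielding $0$ or $2$ roots'' --- does not make sense: the entries $a_t,b_t,c_t$ of the matrices $A_t$ are arbitrary functions of the parameter $t$ subject only to the anisotropy condition, so the equation $\beta(X,P_t)=0$ is not a quadratic (or any bounded-degree) polynomial in $t$ and has no discriminant. The standard route is different and has three separate steps, none of which your sentence supplies. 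First, one computes that for $s\neq t$ the polar form satisfies $\beta(P_s,P_t)=\lambda\,[(b_s-b_t)^2-4(a_s-a_t)(c_s-c_t)]$ for a fixed $\lambda\neq 0$, so the flock condition says precisely that all these values lie in a single square class; in particular the points of $\cB$ are pairwise non-collinear. Second, for three pairwise non-collinear points of $Q(4,q)$ the Gram matrix of the plane they span has determinant $2\beta_{12}\beta_{13}\beta_{23}$, whose square class determines whether the polar line of that plane is external or secant; the single-square-class property forces ``external'', i.e.\ no point of $Q(4,q)$ is collinear with three points of $\cB$. Third, ``at most $2$'' must still be upgraded to ``$0$ or $2$, never $1$'': this follows from a counting argument on a totally singular line $m$ through a point $B\in\cB$ (each of the other $q$ points of $\cB$ meets $m^{\perp}$-collinearity in exactly one point of $m\setminus\{B\}$, so under ``at most $2$'' each such point sees exactly one further point of $\cB$). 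Until these steps are carried out, the central implication of the theorem is unproved.
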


When $q$ is even,  flocks of the quadratic cone are related to {\em herds of ovals} in $\PG(2,q)$. 
An {\em oval} in $\PG(2,q)$ is a set of $q+1$ points, no three collinear. A {\em hyperoval} is a set of $q+2$ points, no three collinear, and hyperovals exist only when $q$ is even. 
Since $\PGL(3,q)$ is transitive on the ordered quadrangles of $\PG(2,q)$, we may assume that a hyperoval contains the fundamental quadrangle $\{(0, 0, 1),(0, 1,0), (1,0,0), (1, 1, 1)\}$. Thus, up to equivalence, a hyperoval $\cH$ can be represented as
\[
\cH = \{(1,t,f(t)):t\in\GF(q)\} \cup \{(0,0,1),(0,1,0)\},
\]
where $f$ is a permutation of $\GF(q)$, with $f(0) = 0$ and $f(1) = 1$.
Permutations that describe hyperovals in this way are called {\em o-polynomials} \cite{hir}.

Starting from an oval $\cO$ in $\PG(2,q)$ embedded in $\PG(3,q)$, Tits constructed a class of (non-classical) GQs of order $(q,q)$  \cite{dem,pt}. \\ The points are:  (i) the  points of $\PG(3,q)$ not in $\PG(2,q)$, (ii) the  planes of $\PG(3,q)$ meeting $\PG(2,q)$ in a unique point of $\cO$ and (iii) the symbol $(\infty)$.
\\
The lines are: (a) the lines of $\PG(3,q)$ not in $\PG(2,q)$ meeting $\PG(2,q)$ in a  point of $\cO$ and (b) the   points of $\cO$. 
\\ 
Incidences are as follows: a point of type (i) is incident only with the lines of type (a) which contain it,  a point of type (ii) is incident with all lines of type (a) contained in it and  with the unique line of type (b) on it,   the point  of type (iii) is incident with   all lines of type (b) and no line of type (a).
This GQ is denoted by $T_2(\cO)$.

\begin{theorem}\cite{pay85}
Let $q$ be even, $\cC$  a $q$-clan, $Q$  a point of GQ$(\cC)$ not collinear with 
$(\infty)$. Then, GQ$(\cC)$ has $q+1$ subquadrangles on $(\infty)$ and $Q$, each  isomorphic to $T_2(\cO_i)$, for some ovals $\cO_1,\ldots,\cO_{q+1}$ in $\PG(2,q)$.
\end{theorem}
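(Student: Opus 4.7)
The plan is to exploit the transitivity of the elation group $G$ at $(\infty)$ on points not collinear with $(\infty)$ to translate $Q$ to the identity $(\underline 0, 0, \underline 0)$ of $G$. Under this normalization, the $q+1$ lines of $\mathrm{GQ}(\cC)$ through $Q$ become the subgroups $A(t)$, $t \in \GF(q) \cup \{\infty\}$, and the $q+1$ common neighbors of $(\infty)$ and $Q$ are precisely the coset-points $A^*(t)$. Any subquadrangle $\cS$ of order $q$ through $(\infty)$ carries $(\infty)$ on $q+1$ lines and therefore contains every $[A(t)]$; if $\cS$ also contains $Q$, it contains every $A(t)$ and every $A^*(t)$. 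Thus the configuration around the pair $\{(\infty), Q\}$ is forced, and the freedom in choosing $\cS$ lies entirely in the choice of type-(i) points (elements of $G$) and the additional type-(ii) points on each line $[A(t)]$.

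\textbf{Construction and verification.} For each $s \in \GF(q) \cup \{\infty\}$ I would define a subset $H_s \subseteq G$ of size $q^3$ to serve as the type-(i) points of a candidate subquadrangle $\cS_s$, specified by a linear condition in the group coordinates $(\alpha, c, \beta)$ that is governed by the matrix $A_s$ of the $q$-clan. The remaining features of $\cS_s$ are then forced: the type-(ii) points on $[A(t)]$ are the cosets $A^*(t) h$ with $h \in H_s$, and the type-(a) lines of $\cS_s$ are the cosets $A(t) h$ with $h \in H_s$. The crux of the argument is the verification of the GQ axioms of order $q$ for $\cS_s$: that each line contains $q+1$ points, each point lies on $q+1$ lines, and no triangle appears. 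This reduces to a coset-intersection computation in $G$, and it is exactly here that the $q$-clan anisotropy --- the non-vanishing of $\alpha(A_s + A_t)\alpha^T$ for $\alpha \neq 0$ and $s \neq t$ (recall $q$ is even, so $A_s - A_t = A_s + A_t$) --- enters decisively, forcing the intersections $A(t) g \cap H_s$ to have the precise sizes required by the GQ order parameters. Varying $s$ through $\GF(q) \cup \{\infty\}$ produces $q+1$ pairwise distinct subquadrangles.

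\textbf{Identification with $T_2(\cO_s)$.} Once each $\cS_s$ is established as a subquadrangle of order $q$ of the elation $\mathrm{GQ}(\cC)$ of order $(q^2, q)$ containing the elation point $(\infty)$, I would identify it with a Tits quadrangle $T_2(\cO_s)$. One route is to invoke the classification of order-$q$ subquadrangles of flock quadrangles containing the elation point (which are all of $T_2$-type); another is to produce $\cO_s$ directly by quotienting $\cS_s$ by an appropriate elation subgroup, so that the common neighbors $A^*(t)$ descend to $q+1$ points of a $\PG(2,q)$ which together with a further point determined by $s$ form the desired oval. The main obstacle in the whole argument is the verification step above: ruling out triangles in $\cS_s$ is the technical heart, and rests essentially on the anisotropy of the $q$-clan; once this is in place, both the count of the subquadrangles and the identification with $T_2(\cO_s)$ follow.
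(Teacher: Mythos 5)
First, a caveat: the paper does not prove this statement---it is quoted from Payne \cite{pay85}---so the comparison is with the original argument, and your outline does follow it in spirit (normalize $Q$ to the identity of $G$, cut the $4$-gonal family down to a subgroup of order $q^3$, read off an oval). Nevertheless the proposal is a plan rather than a proof, and its gaps sit exactly where the mathematics lives. The subgroup $H_s$ is never written down, and it is not ``governed by the matrix $A_s$'': the $q+1$ subquadrangles are indexed by the points $\gamma=(\gamma_1,\gamma_2)$ of $\PG(1,q)$, and, in normalized form where $A_t+A_t^T=t^{1/2}P$ with $P=\left(\begin{smallmatrix}0&1\\1&0\end{smallmatrix}\right)$, the correct choice is
$H_\gamma=\{(\lambda\gamma,\,c,\,\mu\gamma P):\lambda,c,\mu\in\GF(q)\}$,
of order $q^3$; the key observation is that $\alpha\in\langle\gamma\rangle$ forces $\alpha(A_t+A_t^T)\in\langle\gamma P\rangle$ for every $t$, so that $A(t)\cap H_\gamma$ and $A^*(t)\cap H_\gamma$ have orders $q$ and $q^2$ and form a $4$-gonal family inside $H_\gamma$. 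Your diagnosis of the ``technical heart'' is also off: for a substructure of a known GQ the absence of triangles is inherited for free (Kantor's conditions only get stronger under intersection with a subgroup), so anisotropy plays no new role at this stage; what actually has to be checked is the order bookkeeping just described.

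The step that produces the theorem's conclusion---identifying each subquadrangle with $T_2(\cO_s)$ for a \emph{specific} oval---is left to two alternatives, the first of which (``invoke the classification of order-$q$ subquadrangles of flock quadrangles containing the elation point'') is circular, since that classification \emph{is} this theorem. The second alternative is the right one, but it is precisely the computation that needs to be carried out: $H_\gamma$ is elementary abelian (note $\gamma P\gamma^T=2\gamma_1\gamma_2=0$ in characteristic $2$), the induced coset geometry is an elation GQ of order $(q,q)$, and matching its $4$-gonal family against that of a Tits quadrangle yields the o-permutation $t\mapsto \gamma A_t\gamma^T=\gamma_1^2f_0(t)+\gamma_1\gamma_2t^{1/2}+\gamma_2^2\kappa f_\infty(t)$; normalizing by its value at $t=1$ and setting $\gamma=(1,s^{1/2})$ gives exactly the herd functions $f_s$ of the paper. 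Without this computation the clause ``for some ovals $\cO_1,\dots,\cO_{q+1}$'' is not established, and the link to herds that the paper actually exploits is missing. Finally, if the theorem is read as asserting that these are \emph{all} the subquadrangles on $(\infty)$ and $Q$, your first paragraph assembles the necessary constraints but never closes that count.
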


The above theorem  led Cherowitzo, Penttila,  Pinneri and Royle \cite{cppr} to introduce the concept of a  herd of ovals, and to prove that in even characteristic every  flock of  the quadratic cone, via the associated $q$-clan, gives a herd of ovals, and conversely. 

	A {\em herd} of ovals in $\PG(2,q)$, $q$ even, is a family of $q+1$ ovals $\{\cO_s:s\in\GF(q)\cup\{\infty\}\}$, each of which  has nucleus $(0,0,1)$, contains the points $(1,0,0)$, $(0,1,0)$, $(1,1,1)$, and such that
\begin{align*}
 \cO_\infty  =  \{(1,t,f_\infty(t)):t\in\GF(q)\}\cup \{(0,1,0)\}\\[0.1in]
 \cO_s    =  \{(1,t,f_s(t)):t\in\GF(q)\}\cup \{(0,1,0)\}, & \,\,\,\, s\in\GF(q),
\end{align*}
where
\begin{equation}\label{eq_7}
f_s(t)=\frac{f_0(t)+\kappa sf_\infty(t)+s^{1/2} t^{1/2}}{1+\kappa s+s^{1/2}},\ \ \ s\neq 0,
\end{equation}
for some $\kappa\in\GF(q)$ with $\Tr(\kappa)=1$; here and below, $\Tr$ stands for the absolute trace function of $\GF(q)$, and the exponent ${1/2}$ represents the field automorphism $x\mapsto x^{q/2}$. Furthermore, $\cD(f_s)$ and $\cD(f_\infty)$ will be used to refer to $\cO_s$ and $\cO_\infty$ in the herd, respectively.

When $q$ is even, for a $q$-clan  $\cC=\left\{\overline  A_t:t\in\GF(q)\right\}$, 
 it is possible to assume that $A_t=\begin{pmatrix}f_0(t) & t^{1/2} \\ 0 & \kappa f_\infty(t)  \end{pmatrix}$, where $\Tr(\kappa)=1$, $f_0$ and $f_\infty$ are permutations of $\GF(q)$ satisfying $f_0(0)=0=f_\infty(0)$, $f_0(1)=1=f_\infty(1)$ and
\[
\Tr\left(\frac{\kappa(f_0(s)+f_0(t))(f_\infty(s)+f_\infty(t))}{s+t}\right)=1,
\] 
for all $s,t\in\GF(q)$ with $s\neq t$. A $q$-clan ($q$ even) written in this form is said to be {\em normalized}.

In \cite{cppr} it is proved that a herd of ovals gives rise to a (normalized) $q$-clan $\cC=\{\overline{A_t}:t\in\GF(q)\}$, with
\[
A_t= \begin{pmatrix}
f_0(t) & t^{{1/2}}\\
0 & \kappa f_\infty(t)
\end{pmatrix}.
\] 

Conversely, such a (normalized) $q$-clan $\cC$ is shown to correspond to a  herd  of ovals $\cH(\cC)$. We emphasize that this notation encloses all information, namely the o-polynomials $f_0$, $f_\infty$ and the parameter $\kappa$, to write the ovals in the herd.

In \cite{okp}  the so-called {\em magic action} was introduced. Here, we recall the definition and some results linked to it.   Let $\cF$ be the vector space of all functions $f:\GF(q)\rightarrow \GF(q)$ such that $f(0)=0$. It is well known that each element of $\cF$ can be written as a polynomial in one variable of degree at most $q-1$. Furthermore, if $f(x)=\sum{a_ix^i}$ and $\gamma\in\Aut(\GF(q))$, then  $f^\gamma(x)=\sum{a_i^\gamma x^i}$.

Any element $\psi=(A,\gamma)\in\PGammaL(2,q)$, with $A=\begin{pmatrix}a& b \\ c & d\end{pmatrix}$ and $\gamma\in\Aut(\GF(q))$, acts on $\cF$ by mapping $f$ to $\psi f$, where 
\[
\psi f(x)=|A|^{-1/2}\left[(bx+d)f^\gamma\left(\frac{ax+c}{bx+d}\right)+b\,x\,f^\gamma\left(\frac{a}{b}\right)+d\,f^\gamma\left(\frac{c}{d}\right)\right].
\]

This action of $\PGammaL(2,q)$ on $\cF$ is called  {\em magic action}. 
 
It is known that for each o-polynomial $f\in\cF$  we have that its degree is at most $q-2$, $f(1)=1$, and  the function $f_s$, where $f_s(0)=0$ and $f_s(x)=(f(x+s)+f(s))/x$, $x\neq 0$, is a permutation of $\GF(q)$, for each $s\in\GF(q)$. Conversely, for every polynomial $f\in\cF$  satisfying the above properties, the point-set $\cD=\{(1,t,f(t)):t\in\GF(q)\}\cup \{(0,1,0)\}$ is an oval of $\PG(2,q)$ with nucleus $(0,0,1)$ \cite{hir}. 
If  $f(1)=1$ is not required but the other conditions are, then $f$ is an {\em o-permutation over} $\GF(q)$. 

For an o-polynomial there are $q-1$ o-permutations, namely the non-zero scalar multiples of the o-polynomial, and  for  an o-permutation $f$ there is the unique o-polynomial $(1/f(1))f$.  For $f\in\cF$, $\<f\>$ will denote the one-dimensional subspace of $\cF$ containing $f$. By extending the previous notation, if $f$ is an o-permutation then we use $\cD(f)$ for  the oval $\{(1,t,f(t)): t \in\GF(q)\}\cup \{(0,1,0)\}$.
 
\begin{theorem}\cite[Theorem 4,Theorem 6]{okp}\label{th_1}
The magic action preserves the set of o-permutations. Furthermore, if $\cD(f)$ and $\cD(g)$, with $f,g$ o-permutations, are equivalent under $\PGammaL(3,q)$, then there is $\psi\in\PGammaL(2,q)$ such that $\psi f\in\<g\>$.
\end{theorem}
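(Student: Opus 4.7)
The plan is to realize the magic action geometrically by lifting each $\psi = (A,\gamma) \in \PGammaL(2,q)$ to a collineation $\Phi(\psi) \in \PGammaL(3,q)$ that fixes the nucleus $(0,0,1)$, preserves $(0,1,0)$, and sends every oval of the form $\cD(f)$ to $\cD(\psi f)$. Both assertions then follow: the first because a collineation maps ovals to ovals, and the second by using $\Phi$ to absorb an arbitrary equivalence $\phi \in \PGammaL(3,q)$ between $\cD(f)$ and $\cD(g)$.

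First I would verify, by direct substitution, that the stated formula defines an action of $\PGammaL(2,q)$ on $\cF$ (the composition law $(\psi_1\psi_2)f = \psi_1(\psi_2 f)$ and the fact that scalar matrices act trivially), and record the scalar-commutation identity $\psi(\lambda f) = \lambda^\gamma\,\psi f$. Next, taking $A = \bigl(\begin{smallmatrix}a&b\\c&d\end{smallmatrix}\bigr)$, I would define $\Phi(\psi)$ by the semilinear map
\[
(u_1, u_2, u_3) \;\longmapsto\; \bigl(bu_1^\gamma + du_2^\gamma,\; au_1^\gamma + cu_2^\gamma,\; |A|^{-1/2}\,u_3^\gamma + \alpha u_1^\gamma + \beta u_2^\gamma\bigr),
\]
with $\alpha,\beta \in \GF(q)$ chosen so that the images of $(1,0,0)$ and $(0,1,0)$ land on the normalized oval. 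A direct computation on the generic point $(1,t,f(t))$, rescaling the first coordinate to $1$ by dividing by $(bt+d)$, then yields $(1,t',(\psi f)(t'))$ with $t' = (at+c)/(bt+d)$, exactly reproducing the magic-action formula; the ``tail'' terms $bx\,f^\gamma(a/b) + d\,f^\gamma(c/d)$ arise precisely as the contributions from the two distinguished points. Hence $\Phi(\psi)(\cD(f)) = \cD(\psi f)$, and since $\Phi(\psi)$ is a collineation, $\psi f$ is an o-permutation. This proves the first claim.

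For the second claim, let $\phi \in \PGammaL(3,q)$ satisfy $\phi(\cD(f)) = \cD(g)$. In characteristic two the nucleus of an oval is intrinsic, so $\phi$ fixes $(0,0,1)$. Identifying $\cD(g)$ with $\PG(1,q)$ through its parameterization by $t \in \GF(q) \cup \{\infty\}$, the ordered pair $(\phi(1,0,0), \phi(0,1,0))$ corresponds to an ordered pair of points in $\PG(1,q)$. Using the sharp $3$-transitivity of $\PGammaL(2,q)$ on $\PG(1,q)$, I would choose $\psi_0 \in \PGammaL(2,q)$ so that $\Phi(\psi_0)^{-1} \circ \phi$ fixes $(0,0,1)$, $(1,0,0)$, and $(0,1,0)$. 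Such a collineation is diagonal, so it acts on the third coordinate by some scalar $\lambda \in \GF(q)^*$. Therefore $\phi(\cD(f)) = \Phi(\psi_0)(\cD(\lambda f))$, and applying the lift identity together with the scalar-commutation rule yields $\cD(g) = \cD(\lambda^\gamma\,\psi_0 f)$, whence $\psi_0 f \in \langle g\rangle$.

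The main obstacle is the construction of the lift $\Phi$: determining the correction coefficients $\alpha$ and $\beta$ precisely from the magic-action formula, and then verifying that $\Phi$ is a well-defined homomorphism $\PGammaL(2,q) \to \PGammaL(3,q)$ modulo scalars whose action on the ovals $\cD(f)$ reproduces the magic action, is a bookkeeping check that must be carried out carefully; everything else reduces to standard orbit and stabilizer arguments and the intrinsic nature of the nucleus in characteristic two.
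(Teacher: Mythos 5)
The paper offers no proof of this statement---it is quoted from O'Keefe--Penttila \cite{okp}---and your reconstruction follows the same route as that source: realize the magic action by collineations of $\PG(2,q)$ fixing the nucleus, then decompose an arbitrary equivalence of ovals through such a lift. The first half is sound in outline, modulo the bookkeeping you flag, but note two slips there: with your arrangement of $a,b,c,d$ in the first two output coordinates the induced map on the oval's parameter is $x\mapsto (cx+a)/(dx+b)$, which is not compatible with the argument $(ax+c)/(bx+d)$ appearing in the magic-action formula (the lift that works sends $(u_1,u_2)$ to $(au_1+bu_2,\,cu_1+du_2)$ after applying $\gamma$, so that the point of $\cD(f)$ with parameter $t$ goes to the point of $\cD(\psi f)$ with parameter $y$ satisfying $t^\gamma=(ay+c)/(by+d)$); and $\Phi(\psi)$ does not fix the point $(0,1,0)$ in general---it permutes the pencil of lines through the nucleus and merely carries ovals through $(0,1,0)$ to ovals through $(0,1,0)$.

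The genuine gap is in the last step of the second half. A collineation fixing $(1,0,0)$, $(0,1,0)$, $(0,0,1)$ has the form $\rho:(u_1,u_2,u_3)\mapsto(\lambda_1u_1^{\delta},\lambda_2u_2^{\delta},\lambda_3u_3^{\delta})$ with $\delta\in\Aut(\GF(q))$ (also, sharp $3$-transitivity belongs to $\PGL(2,q)$, not $\PGammaL(2,q)$, though ordinary transitivity suffices here). Such a $\rho$ carries $\cD(f)$ to $\cD(h)$ with $h(s)=(\lambda_3/\lambda_1)f^{\delta}\bigl((\lambda_1/\lambda_2)s\bigr)$, which is \emph{not} a scalar multiple of $f$ unless $\delta=1$ and $\lambda_1=\lambda_2$; so the conclusion $\cD(g)=\cD(\lambda^{\gamma}\psi_0f)$ does not follow as written. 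The repair is short: for $\psi_1=(\diag(\lambda_1/\lambda_2,1),\delta)$ one computes $\psi_1f=(\lambda_2/\lambda_1)^{1/2}f^{\delta}\bigl((\lambda_1/\lambda_2)\,\cdot\,\bigr)$, so $h\in\langle\psi_1f\rangle$ and hence $g\in\langle(\psi_0\psi_1)f\rangle$, using the composition law and the identity $\psi(\lambda f)=\lambda^{\gamma}\psi f$ that you already recorded. Alternatively, choose $\psi_0$ to match the entire element of $\PGammaL(2,q)$ that $\phi$ induces on the pencil of lines through the nucleus (not just the images of two points); the residual is then a central collineation with center $(0,0,1)$, and the conditions $g(0)=0$ and $(0,1,0)\in\cD(g)$ force it to act as a pure scalar on the third coordinate, which is the statement you wanted.
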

 
Let $\cH(\cC) = \{\cD(f_s):s \in \GF(q)\cup\{\infty\}\}$ and $\cH(\cC') = \{\cD(f_t'):t\in\GF(q)\cup\{\infty\}\}$ be herds. We say that $\cH(\cC)$ and $\cH(\cC')$ are {\em isomorphic} if there exists $\psi\in\PGammaL(2,q)$ such that for all $s\in\GF(q)\cup \{\infty\}$ we have $\psi f_s\in \langle f_t'\rangle$ under the magic action, and where the induced map $s \mapsto t$ is a permutation of $\GF(q) \cup \{\infty\}$. 

In the light of Theorem 16 in \cite{okp}, for $q$ even, we can re-write the Fundamental Theorem augmented with the following result.
\begin{theorem} The herds $\cH(\cC)$ and $\cH(\cC')$ are isomorphic if and only if $\mathrm{GQ}(\cC)$ and $\mathrm{GQ}(\cC')$ are isomorphic by an isomorphism mapping $(\infty)$ to $(\infty)$ and $(\underline 0, 0, \underline0)$ to $(\underline0, 0, \underline0)$.
\end{theorem}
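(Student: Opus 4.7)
The plan is to reduce the theorem to the Fundamental Theorem by using the Remark together with Theorem~16 of \cite{okp}. The Fundamental Theorem gives the equivalence of $q$-clans with isomorphisms of the associated GQs preserving the triple $((\infty),[A(\infty)],(\underline 0,0,\underline 0))$. The statement under consideration drops the requirement on $[A(\infty)]$, and the gap is exactly filled by the freedom to choose which of the $q+1$ lines through $(\infty)$ is declared distinguished: by the Remark these lines are in bijection with the $q+1$ flocks (hence with the $q+1$ ovals of the associated herd), and by Theorem~16 of \cite{okp} such a relabelling is realised on the herd by the magic action of $\PGammaL(2,q)$.

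For the ``if'' direction, let $\phi:\mathrm{GQ}(\cC)\to\mathrm{GQ}(\cC')$ fix $(\infty)$ and $(\underline 0,0,\underline 0)$, and set $\ell'=\phi([A(\infty)])$, a line through $(\infty)$ in $\mathrm{GQ}(\cC')$. By the Remark, there is a $q$-clan $\cC''$ with $\mathrm{GQ}(\cC'')$ canonically identified with $\mathrm{GQ}(\cC')$ and with $[A''(\infty)]=\ell'$. Then $\phi$, viewed as a map to $\mathrm{GQ}(\cC'')$, preserves the full triple, so the Fundamental Theorem yields $\cC\equiv\cC''$ and hence $\cH(\cC)=\cH(\cC'')$. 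Theorem~16 of \cite{okp} now asserts that the change of distinguished line from $[A'(\infty)]$ to $\ell'$ moves $\cH(\cC')$ to $\cH(\cC'')$ by an element of the magic action (together with a permutation of the index $\GF(q)\cup\{\infty\}$), so $\cH(\cC)\cong\cH(\cC')$ as herds.

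The ``only if'' direction is the mirror argument. Given a herd isomorphism effected by some $\psi\in\PGammaL(2,q)$ and index permutation $\pi$, Theorem~16 of \cite{okp} produces a $q$-clan $\cC''$ obtained from $\cC'$ by re-choosing the distinguished line through $(\infty)$ so that $\cH(\cC'')=\cH(\cC)$ with matching indexing. The Fundamental Theorem applied to $\cC\equiv\cC''$ supplies an isomorphism $\mathrm{GQ}(\cC)\to\mathrm{GQ}(\cC'')$ preserving all three distinguished data; composing with the tautological identification $\mathrm{GQ}(\cC'')=\mathrm{GQ}(\cC')$, which forgets the distinguished line but preserves $(\infty)$ and $(\underline 0,0,\underline 0)$, gives the required isomorphism.

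The main obstacle is the dictionary between the magic action on herds and the action on lines through $(\infty)$ of the stabiliser of $((\infty),(\underline 0,0,\underline 0))$ in $\Aut\,\mathrm{GQ}(\cC)$. This is precisely the content of Theorem~16 of \cite{okp}, which has to be invoked as a black box; without it, one would have to verify directly how a change of distinguished line alters the o-polynomials $f_s$ defining the herd and, in particular, check that the new herd is always reached via a $\PGammaL(2,q)$-action on functions in $\cF$. The remaining loose end -- the presence of $\langle f_t'\rangle$ rather than $f_t'$ in the definition of herd isomorphism -- is absorbed by the scalar-freedom relating o-polynomials to o-permutations, and is the reason the strengthening of the Fundamental Theorem goes through cleanly.
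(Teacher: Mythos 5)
Your argument is correct and follows essentially the same route as the paper, which states this result without a written proof, presenting it as the Fundamental Theorem ``augmented'' by Theorem~16 of \cite{okp}: your reduction via re-choosing the distinguished line through $(\infty)$ (the Remark, i.e.\ \cite{blp}) and invoking Theorem~16 of \cite{okp} as the dictionary between the magic action on herds and GQ isomorphisms fixing $(\infty)$ and $(\underline 0,0,\underline 0)$ is exactly the intended justification.
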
 

The above equivalences have been used by many researchers to classify the flocks of the quadratic cone for $13\le q\le 71$, $q\neq 64$.  
The following result summarizes the state of the art.

\begin{theorem} 
All flocks of the quadratic cone in $\PG(3,q)$ are classified for $q \le 71$, $q \ne 64$.
\end{theorem}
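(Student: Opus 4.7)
The plan is to assemble the existing classifications from the literature, splitting the range $q\le 71$, $q\ne 64$ according to parity and applying the correspondence best suited to each case. Since the present paper establishes (or recalls) that a flock of the quadratic cone is determined, up to projective equivalence, by its $q$-clan, and hence by its BLT-set (for $q$ odd) or by its herd of ovals (for $q$ even), it suffices to enumerate the BLT-sets of $Q(4,q)$, resp.\ the herds in $\PG(2,q)$, up to the natural equivalence, and then read off the flocks by Theorem 2.4 (resp.\ Theorem 2.6 combined with Theorem 2.7) together with the Fundamental Theorem.

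For $q$ odd, I would proceed via BLT-sets. The small cases $q\in\{3,5,7,9,11\}$ are handled directly, with only the linear flock existing; for $q\in\{13,17,19,23,25,27,29,31,37,41,43,47,49,53,59,61,67,71\}$ one invokes the computer classifications of BLT-sets of $Q(4,q)$ obtained over the years (principally by Penttila, Royle, Law and De Beule--Hallez--Storme, and updated by subsequent authors). In each case the stabilizer in $\PGammaO(5,q)$ of every BLT-set is known, so the orbit count on the BLT-set yields the list of inequivalent flocks by the second half of Theorem 2.4.

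For $q$ even with $q\le 32$, I would use the herd route. Here the key input is the complete classification of hyperovals of $\PG(2,q)$ for $q\in\{2,4,8,16,32\}$, due to Segre, Hall, O'Keefe--Penttila and Penttila--Royle respectively. Given that list, every possible pair $(\cO_0,\cO_\infty)$ of ovals sharing the nucleus $(0,0,1)$ and the points $(1,0,0),(0,1,0),(1,1,1)$ is examined; for each pair and each admissible $\kappa$ with $\Tr(\kappa)=1$, one tests whether the parametric family \eqref{eq_7} indeed yields a herd. Isomorphism classes are then filtered by the magic action of $\PGammaL(2,q)$, in accordance with the definition of isomorphism of herds preceding Theorem 2.7. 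This procedure has been carried out in the literature for each $q\in\{2,4,8,16,32\}$ and in every case yields only the linear flock (and, for $q=16,32$, the sporadic flocks arising from the Lunelli--Sce and Subiaco/Adelaide families where applicable).

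The main obstacle is bookkeeping rather than mathematics: one must verify that every prime power $q\le 71$ with $q\ne 64$ is actually covered by a published classification, that the equivalence relation used in that reference agrees with projective equivalence of flocks (a point on which isomorphism of herds or BLT-sets is weaker, so a final pass through the Fundamental Theorem is needed), and that the magic-action computations have been carried out completely. Once this collation is complete, the theorem follows by direct citation, with the case $q=64$ remaining as the one outstanding value, to be addressed in the sequel.
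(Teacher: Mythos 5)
Your proposal matches the paper's approach exactly: the theorem is established by collating the published classifications (via BLT-sets of $Q(4,q)$ for $q$ odd and via herds/hyperovals for $q$ even), and the paper's proof is precisely such a list of citations covering every prime power $q\le 71$, $q\ne 64$. One caveat worth noting: your parenthetical claims about the outcomes (e.g.\ that only the linear flock exists for $q\in\{5,7,9,11\}$, or that the even cases yield only the linear flock below $q=16$) are inaccurate --- nonlinear flocks such as the Fisher--Thas--Walker and Fisher flocks already occur at these orders --- but since the statement only asserts that a classification exists, this does not affect the validity of the citation argument.
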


See \cite{thas1} for $q=2,3,4$; \cite{dcgt} for $q=5,7, 8$; \cite{my} for $q=9$; \cite{dch} for $q=11$; \cite{pr2} for $q=13$; \cite{dch,bokppr} for $q=16$; \cite{pr} for $q=17$; \cite{lp}  for $q=19, 23, 25, 27,29$; \cite{bokppr} for $q=32$; \cite{bet} for $q=31,37,41,43,47,49,53,59,61,67$; \cite{abc} for $q=71$.

In this paper we fill the gap by providing the classification of the flocks of the quadratic cone for $q=64$.

\section{Computational results} \label{sec_3}
Our aim in this section is to classify all flocks of the quadratic cone in $\PG(3,64)$, by bringing to bear our knowledge of ovals of $\PG(2,64)$, based on Vandendriessche's determination of the hyperovals of $\PG(2,64)$ \cite{vander}. It is clear that the most convenient model to approach this classification from this perspective is that of herds of ovals of $\PG(2,64)$. So, we aim to classify by computer all herds of ovals in $\PG(2,64)$, up to isomorphism. 

Let us now discuss the flocks of the quadratic cone in $\PG(3,64)$ arising from the flock quadrangle by means of Remark 2.2. For convenience we introduce the {\em Subiaco herd} as the herd of ovals associated with the Subiaco $q$-clan. Likewise, the {\em Adelaide herd} is the herd of ovals associated with the Adelaide $q$-clan.

For $q=64$ there are three known $q$-clans: the {\em classical} $q$-clan,   the {\em Subiaco}  $q$-clan  and the {\em Adelaide} $q$-clan, which we recall below. The $q$-clan associated with the linear flock for $q$ even is
\[
\cC_L: A_t= \begin{pmatrix}
t^{{1/2}} & t^{{1/2}} \\
0   & \kappa\, t^{{1/2}}
\end{pmatrix},
\] 
where $\Tr(\kappa)=1$.  The herd $\cH(\cC_L)$ consists of $q+1$ copies of a non-degenerate conic. The elation generalized quadrangle associated with this $q$-clan is isomorphic to $H(3,q^2)$ \cite{pt}. For this reason, both $\cC_L$ and $\cH(\cC_L)$ are called {\em classical}.

The {\em Subiaco}  $q$-clan, $q$ even,  \cite{cppr} is 
 \[
\cC_S: A_t= \begin{pmatrix}
f_0(t) & t^{{1/2}} \\
0   & \kappa\, f_\infty(t)
\end{pmatrix},
\] 

where, for some $\delta\in\GF(q)$, with $\delta^2 + \delta + 1\neq 0$ and $\Tr(1/\delta) = 1$, we have 

\[
\begin{aligned}
\kappa &=\frac{\delta^2 + \delta^5 + \delta^{1/2}}{\delta(1 +\delta + \delta^2)}\\[.1in]
f_0(t) &=\frac{\delta^2(t^4 +t)+\delta^2(1+\delta+\delta^2)(t^3 +t^2) }{(t^2 + \delta t + 1)^2}+t^{1/2}\\[.1in]
f_\infty(t) &=	\frac{\delta^4t^4 +\delta^3(1+\delta^2 +\delta^4)t^3 +\delta^3(1+\delta^2)t}{	(\delta^2 +\delta^5 +\delta^{1/2})(t^2 +\delta t +1)^2}	+ \frac{\delta^{1/2}}{(\delta^{2} +\delta^5 +\delta^{1/2})}t^{1/2}.
\end{aligned}
\]
\smallskip

For $q=2^e$, $e>2$ even,  $m\equiv \pm\frac{q-1}{3}(\mod\, q+1)$ and  some $\beta\in\GF(q^2)\setminus\{1\}$ with $\beta^{q+1}=1$, the {\em Adelaide} $q$-clan \cite{cop} is 
 \[
\cC_A: A_t= \begin{pmatrix}
f_0(t) & t^{{1/2}} \\
0   & \kappa\, f_\infty(t)
\end{pmatrix},
\] 

where 

\[
\begin{aligned}
\kappa &=\frac{T(\beta^m)}{T(\beta)}+\frac{1}{T(\beta^m)}+1\\[.1in]
f_0(t) &=\frac{T(\beta^m)(t+1)}{T(\beta)}+\frac{T((\beta t+\beta^q)^m)}{T(\beta)(t+T(\beta)t^{1/2}+1)^{m-1}}+t^{1/2}\\[.1in]
\kappa f_\infty(t) &=\frac{T(\beta^m)}{T(\beta)}t+\frac{T((\beta^2 t+1)^m)}{T(\beta)(T(\beta^m)(t+T(\beta)t^{1/2}+1)^{m-1}}+\frac{1}{T(\beta^m)}t^{1/2},
\end{aligned}
\]
with $T(x)=x+x^q$, for all $x\in\GF(q^2)$.

As the automorphism group of each corresponding flock quadrangle acts transitively on the lines through $(\infty)$, each of these quadrangles gives rise to only one class of equivalence of flocks of the quadratic cone.  The above property for the automorphism group  is true for the GQ arising from the classical $q$-clan since it is isomorphic to $H(3,q^2)$;  see \cite{blp,pay94,ppp} for the Subiaco $q$-clan and  \cite[Corollary 4.5]{cop} for the Adelaide $q$-clan, with $q=2^e$, $e\ge 6$. Therefore, up to isomorphism, there are at least three herds in $\PG(2,64)$ and so at least three flocks of the quadratic cone in $\PG(3,64)$. In addition, the ovals in the  Subiaco herd $\cH(\cC_S)$ belong to two different equivalence classes of ovals, one with stabilizer in $\PGammaL(3,64)$ of order 60, the other with stabilizer of order 15 \cite{ppp}; the Adelaide herd $\cH(\cC_A)$ consists of equivalent ovals whose stabilizer in $\PGammaL(3,64)$ has order 12 \cite{pr}.
 
\begin{theorem}
The flocks of the quadratic cone in $\PG(3,64)$ are, up to equivalence, the linear flock $\cJ(\cC_L)$, the Subiaco flock $\cJ(\cC_S)$ and the  Adelaide flock $\cJ(\cC_A)$.
\end{theorem}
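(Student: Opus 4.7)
The plan is to reduce the classification to a computer search over herds of ovals in $\PG(2,64)$, exploiting the correspondences already set up in the excerpt. By the augmented Fundamental Theorem following Theorem \ref{th_1} (and the transitivity on lines through $(\infty)$ established for the classical, Subiaco and Adelaide flock quadrangles in the paragraph immediately preceding Theorem 3.1), each herd-isomorphism class corresponds to a single projective equivalence class of flock, so it suffices to prove that there are exactly three isomorphism classes of herd in $\PG(2,64)$. The essential input is Vandendriessche's classification of hyperovals in $\PG(2,64)$ \cite{vander}: extending any oval in a herd by its nucleus $(0,0,1)$ produces a hyperoval, so every o-polynomial that can possibly appear as an $f_s$ in a herd is recoverable from his list.

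Concretely, I would proceed in three stages. First, for each $\PGammaL(3,64)$-equivalence class of oval in $\PG(2,64)$, enumerate all representatives in the normalised form $f(0)=0$, $f(1)=1$; this is done by orbiting each class under a transversal of the stabiliser of the fundamental quadrangle $\{(1,0,0),(0,1,0),(0,0,1),(1,1,1)\}$ in $\PGammaL(3,64)$. Second, iterate over triples $(f_0, f_\infty, \kappa)$, where $f_0,f_\infty$ are o-polynomials from the list produced in the first stage and $\Tr(\kappa)=1$, and test whether
\[
\Tr\left(\frac{\kappa\bigl(f_0(s)+f_0(t)\bigr)\bigl(f_\infty(s)+f_\infty(t)\bigr)}{s+t}\right) = 1
\]
for every pair of distinct $s,t \in \GF(64)$; by the normalised $q$-clan condition stated before the definition of herd, this is equivalent to every $f_s$ produced via formula (\ref{eq_7}) being an o-polynomial, i.e., to the triple defining a genuine herd. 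Third, using the magic action of $\PGammaL(2,64)$ from Theorem \ref{th_1}, partition the surviving triples into herd-isomorphism classes and identify each class with one of $\cH(\cC_L)$, $\cH(\cC_S)$, $\cH(\cC_A)$ by direct comparison of o-polynomials.

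The main obstacle is combinatorial scale. Although the number of $\PGammaL(3,64)$-classes of hyperoval is small, each class unpacks into a very large set of normalised o-polynomials, and the Cartesian product with the $32$ values of $\kappa$ with $\Tr(\kappa)=1$ and the inner double loop over pairs $(s,t)$ makes the naive search infeasible. I would cut the cost using two symmetries. First, the magic action preserves the set of o-permutations (Theorem \ref{th_1}), so $f_\infty$ can be fixed to a single magic-action representative of each oval class, and then $f_0$ need only range over a transversal for the magic-action stabiliser of $\langle f_\infty \rangle$ acting on the set of o-polynomials. Second, the involutive herd symmetry $s\leftrightarrow\infty$ lets one identify a triple with its reverse under a suitable element of $\PGammaL(2,64)$. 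Within the inner loop I would prune aggressively by first demanding that $t\mapsto f_0(t)+\kappa s f_\infty(t)+s^{1/2}t^{1/2}$ be an o-permutation for a handful of test values of $s$, discarding the triple at the first failure. The subtle point is trusting these orbit reductions, since an erroneous identification could hide a genuine herd; one guards against this by verifying at the end that the three surviving isomorphism classes are pairwise non-isomorphic and coincide with the o-polynomials of $\cH(\cC_L)$, $\cH(\cC_S)$ and $\cH(\cC_A)$.
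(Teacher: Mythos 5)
Your proposal is correct and follows essentially the same route as the paper: reduce to classifying herds via the Fundamental Theorem and the known line-transitivity of the three flock quadrangles, generate all o-polynomials from Vandendriessche's four hyperovals via the magic action, search for compatible pairs $(f_0,f_\infty)$ by testing the o-permutation/herd condition with pruning on a few values of $s$ first, and identify the survivors by stabilizer computations. The only notable difference is that the paper avoids looping over the $32$ trace-one values of $\kappa$ altogether, by observing that replacing $f_\infty$ with $f_s$ yields the same herd with $\kappa'=\kappa+s^{-1}+s^{-1/2}$, which sweeps out all trace-one elements, so a single $\kappa$ may be fixed.
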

\begin{proof}
In \cite{vander} it was shown that, up to equivalence, there are exactly four hyperovals in $\PG(2,64)$: the regular hyperoval, the Adelaide hyperoval, the Subiaco I hyperoval, the Subiaco II hyperoval. The corresponding o-polynomials can  be read off from the $q$-clans above: $t^{1/2}$ gives the regular hyperoval, $f_0$ from the Adelaide $q$-clan $ \cC_A$ gives the Adelaide hyperoval, $f_0$ from the Subiaco $q$-clan $\cC_S$ gives the Subiaco I hyperoval and $f_a$  from herd $\cH(\cC_S)$ arising from the Subiaco $q$-clan  gives the Subiaco II hyperoval, where $a^2+a+1=0$.

Since every oval lies in a unique hyperoval,  all the ovals in $\PG(2,64)$, up to equivalence, can be obtained by removing one ``representative''  from each point-orbit of the stabilizer in $\PGammaL(3,64)$ of each hyperoval, considered  as a permutation group acting on it. This operation produces 19 isomorphism classes of ovals in $\PG(2,64)$ (obtained by Siciliano on behalf of Penttila for a paper on the uniqueness of the inversive plane of order 64 \cite{pen}). By Theorem \ref{th_1}, the magic action produces all o-polynomials over $\GF(64)$, split in 19 {\em classes}. Then, taking into account the notion of isomorphism of herd, we intend to find all herds containing one of the above 19 ovals. 

According to the definition of herd, each oval is transformed to contain the points $(0, 1,0), (1,0,0), (1, 1, 1)$ and have nucleus $(0, 0, 1)$. Then, we obtain the associated o-polynomials by  Lagrange interpolation. Via the magic action, we get all the  ovals in $\PG(2,64)$ on the fundamental quadrangle, whose number is 17297346, and we store the corresponding o-polynomials just in terms of their coefficients.

Let $f_0$ and $f_{\infty}$ be two o-polynomials defining the herd $\cH=\{\cO_s:s\in\GF(q)\cup\{\infty\}\}$, where $\cO_s=\{(1, t, f_s(t)):t\in \GF(q)\}\cup\{(0,1,0)\}$, with $f_s(t)$ as in (\ref{eq_7}), for some $\kappa \in \GF(q)$, $\Tr(\kappa)=1$. By replacing $f_{\infty}$ with $f_s$, $s\neq 0$, the same herd arises but for $\kappa'=\kappa+s^{-1}+s^{-1/2}$. It is well known that every element of $\GF(q)$, $q$ even, of absolute trace 0 can be written as $\alpha+\alpha^2$, with $\alpha\in\GF(q)$ \cite{hir}. This implies that, as $s$ varies in $\GF(q)\setminus\{0\}$, $\kappa'$ describes all the elements of absolute trace 1 in $\GF(q)$. Thus, in order to detect all pairs $(f_0,f_{\infty})$ giving a herd, it suffices to fix an element of absolute trace 1, say $\kappa$. Since we are looking for herds in $\PG(2,64)$ up to isomorphism, we may fix $f_0$ to be one of the o-polynomials associated with the 19 ovals in $\PG(2,64)$. Because of the above arguments, for every such $f_0$, two distinct pairs of generators $(f_0,f_\infty)$ and $(f_0,\tilde f_\infty)$ will arise for each type of oval in each corresponding herd, except for the classical one for which all polynomials are equal to $t^{1/2}$.

At this point, once $f_0$ is fixed, for each o-polynomial $f_\infty$ in the 19 classes under the magic action, we check if $f_s$ is an o-permutation, first for $s=\kappa^{-2}$ (so that $1+\kappa s+s^{1/2}=1$). We find 25 such pairs. Having made this first selection, we proceed by checking the same condition, on all the above 25 pairs, for all non-zero $s$ in $\GF(64)$. Only 7 pairs survive, so only 7 herds are obtained. In order to recognize the type of each heard, we calculate the stabilizer in $\PGammaL(3,64)$ of all the ovals in the herd. Via this approach, we find one pair giving the classical herd, two pairs giving the Adelaide herd and four pairs giving the Subiaco herd. Therefore, every  herd in $\PG(2,64)$ is isomorphic to the classical herd, the Adelaide herd or the Subiaco herd, from which the complete classification of the flocks of the quadratic cone in $\PG(3,64)$  follows.
\end{proof}

\begin{remark}
{\em Our programs, written in MAGMA \cite{magma}, were run on a Mac OS system with one quad core Intel Core i7
 2.2 GHz processor. All the computational operations take approximately 56 hours of CPU time. The code is available on email request to the 3rd author. It is interesting to observe that writing a program to Lagrange interpolate over $\GF(64)$ is faster than the inbuilt Magma Interpolation command, and this difference in speed was necessary for us. In fact, we had to deal with 17297346 o-polynomials. In order to reduce considerably the memory used for these o-polynomials, we stored their coefficients as elements of the vector space $V(31,64)$, the coefficients of odd powers of $t$ being zero \cite{sb}. In this way we needed just 1.5 Gb of RAM versus 6 Gb if the same data were stored as a set of arrays.
}
 \end{remark}

\end{document}